\documentclass{amsart}

\usepackage{amsmath}
\usepackage{amssymb}
\usepackage{amsthm}
\usepackage{graphicx}
\usepackage{bbm}

\newtheorem{theorem}{Theorem}

\newtheorem{lemma}[theorem]{Lemma}

\newcommand{\Oh}{\mathrm{O}}
\newcommand{\oh}{\mathrm{o}}
\newcommand{\im}{\mathrm{i}}
\newcommand{\e}{\mathrm{e}}
\newcommand{\dd}{\mathrm{d}}
\newcommand{\prob}{\mathbf{P}}

\title[The Hartman-Watson Distribution revisited]{The Hartman-Watson
Distribution revisited: Asymptotics for Pricing Asian Options}

\author{Stefan Gerhold}

\address{Vienna University of Technology, Wiedner Hauptstra\ss{}e 8--10,
A-1040 Vienna, Austria}

\email{sgerhold at fam.tuwien.ac.at}

\date{\today}

\begin{document}

\begin{abstract}
  Barrieu, Rouault, and Yor [\emph{J.~Appl.\ Probab.}~41 (2004)] determined
  asymptotics for the logarithm of the distribution function of the
  Hartman-Watson distribution. We determine the asymptotics
  of the density. This refinement can be applied to the pricing of
  Asian options in the Black-Scholes model.
\end{abstract}

\keywords{Hartman-Watson distribution, Asian option, saddle point method}

\subjclass[2010]{Primary: 62E20; Secondary: 60J65} 

\maketitle

\section{Introduction and main result}

The distribution of the integral of geometric Brownian motion has attracted
a lot of interest, in particular because it is needed to calculate the price
of Asian options in the Black-Scholes model.
Yor~\cite{Yo92} found the formula
\begin{equation}\label{eq:yor}
  \prob[A_t^{(\nu)} \in \dd u \mid W_t+\nu t=x] = \frac{\sqrt{2\pi t}}{u}
    \exp\left(\frac{x^2}{2t}-\frac{1+\e^{2x}}{2u} \right) \mathrm{I}_0(\e^x/u) f_{\e^x/u}(t)  
    \, \dd u,
\end{equation}
where $\mathrm{I}_{\nu}$ denotes, as usual, the modified Bessel function of the first
kind, and
\[
  A_t^{(\nu)} = \int_0^t \exp(2( W_h+\nu h))\, \dd h,
\]
where~$W$ is a standard Brownian motion. The present note focuses on the
function~$f_r(t)$ in~\eqref{eq:yor}, which is the density of the Hartman-Watson
distribution~\cite{HaWa74,Yo80}.
It is defined for a positive parameter~$r$ by the Laplace transform
\[
  \int_0^\infty \e^{-u t} f_r(t)\, \dd t = \frac{\mathrm{I}_{\sqrt{2u}}(r)}{\mathrm{I}_0(r)},
   \qquad \Re(u) > 0.
\]
Small time asymptotics of the conditional density~\eqref{eq:yor}
correspond to left tail asymptotics of~$f_r(t)$.
Numerical problems in the evaluation of~\eqref{eq:yor} for small~$t$ prompted
Barrieu, Rouault, and Yor~\cite{BaRoYo04} to analyze the left tail of the Hartman-Watson distribution asymptotically.
Using the G\"artner-Ellis theorem from large deviations theory, they obtained the asymptotics
\begin{equation}\label{eq:yor asympt}
  F_r(t) = \exp\left( - \frac{\log(1/t)^2}{2t} + \oh\Big(\frac{\log(1/t)^2}{t}\Big) \right), \qquad t\to 0,
\end{equation}
for the distribution function. However, this result is not immediately applicable
to the calculation of~\eqref{eq:yor}.
Barrieu et al.~\cite{BaRoYo04} write that
``the standard asymptotic methods (e.g.\ the saddle point method) do not seem to
be suitable for this study'', and that they ``are not able to refine these
results for the Hartman-Watson density itself''.

In fact the saddle point method~\cite{deBr58,Wo89} is applicable to
the Laplace inversion integral
\[
  f_r(t) = \frac{1}{2\im \pi} \int_{-\im\infty}^{+\im\infty} \e^{ut}
    \frac{\mathrm{I}_{\sqrt{2u}}(r)}{\mathrm{I}_0(r)}\, \dd u
\]
for the density~$f_r(t)$, but needs some care.
First, replacing $\mathrm{I}_{\sqrt{2u}}(r)$ by an asymptotic
approximation relieves us from studying potentially difficult monotonicity properties
of the modified Bessel function, and allows to formulate the result
in a way that avoids roots of equations involving the Bessel function.
Second, it turns out that elementary approximations of the integrand's
saddle point lead to integration
contours that are too far away from the saddle to make the method work. An approach
based on a contour through the \emph{exact} saddle point
establishes the following asymptotics for the density~$f_r(t)$.
For brevity, we write
\[
  \rho = \log \frac{r}{2\sqrt{2}}.
\]
\begin{theorem}\label{thm:main}
  For $t>0$, denote by $u_0(t)$ the largest solution of the equation
  \begin{equation}\label{eq:sp eq}
    t = \frac{\log u}{2 \sqrt{2u}} - \frac{\rho}{\sqrt{2u}}
    +\frac{1}{4u},
  \end{equation}
  which exists for all sufficiently small~$t$.
  Then the Hartman-Watson density satisfies
  \begin{align}
    f_r(t) &= \frac{\sqrt{\e}}{\pi\, \mathrm{I}_0(r)} \sqrt{\frac{u_0(t)}
      {\log u_0(t) - 2-2\rho}}\times \e^{-t u_0(t) 
    + \sqrt{2u_0(t)}} \left(1+\Oh(\sqrt{t}\log(1/t)^2)\right) \label{eq:main} \\
    &= \frac{\sqrt{\e}}{2\pi\, \mathrm{I}_0(r)} \frac{\log(1/t)^{1/2}}{t} \e^{-t u_0(t) 
    + \sqrt{2u_0(t)}} \left(1+\Oh\Big(\frac{\log \log (1/t)}{\log(1/t)}\Big)\right) \label{eq:main rough}
  \end{align}
  as $t\to0$.
\end{theorem}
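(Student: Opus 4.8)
The plan is to apply the saddle point method to the Bromwich integral
\[
  f_r(t) = \frac{1}{2\pi\im}\int_{c-\im\infty}^{c+\im\infty}\e^{ut}\,
    \frac{\mathrm{I}_{\sqrt{2u}}(r)}{\mathrm{I}_0(r)}\,\dd u,\qquad c>0,
\]
using a contour through the exact saddle, after replacing $\mathrm{I}_{\sqrt{2u}}(r)$ by its large-order asymptotics. From the power series one has $\mathrm{I}_\nu(r)=(r/2)^\nu\Gamma(\nu+1)^{-1}(1+\Oh(1/|\nu|))$ uniformly for $|\arg\nu|\le\pi/4$ as $|\nu|\to\infty$; combined with Stirling's formula this gives, with $\nu=\sqrt{2u}$,
\[
  \frac{\mathrm{I}_{\sqrt{2u}}(r)}{\mathrm{I}_0(r)} = g(u)\bigl(1+\Oh(u^{-1/2})\bigr),\qquad
  g(u):=\frac{\exp\!\bigl(\sqrt{2u}\,(1+\rho-\tfrac12\log u)\bigr)}{\sqrt{2\pi}\,\mathrm{I}_0(r)\,(2u)^{1/4}},
\]
uniformly on vertical lines $\Re u=a$ as $a\to\infty$. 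Put $\Phi(u):=ut+\log g(u)$. A short calculation shows that $\Phi'(u)=0$ is precisely equation~\eqref{eq:sp eq} and that
\[
  \Phi''(u)=\frac{1}{2u\sqrt{2u}}\Bigl(\tfrac12\log u-\rho-1+\tfrac1{\sqrt{2u}}\Bigr).
\]

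I would next analyse $u_0(t)$. Writing $\kappa(u)$ for the right-hand side of~\eqref{eq:sp eq}, one has $\kappa(0^+)=+\infty$, $\kappa(+\infty)=0$, and $\kappa'(u)<0$ for large $u$; hence for small $t$ the largest solution $u_0(t)$ of $\kappa(u)=t$ exists, tends to $+\infty$, and $\Phi''(u_0(t))=-\kappa'(u_0(t))>0$, so that the vertical line $\Re u=u_0(t)$ is an admissible contour for the method. Bootstrapping~\eqref{eq:sp eq} gives $\log u_0(t)=2\log(1/t)+2\log\log(1/t)+\Oh(1)$ and $u_0(t)\sim\tfrac12\log(1/t)^2/t^2$, whence $\Phi''(u_0)\sim(\log u_0-2-2\rho)/(4u_0\sqrt{2u_0})$ and (by a further differentiation) $|\Phi'''(u_0)|\asymp(\log u_0)\,u_0^{-5/2}$; these estimates control every quantity occurring in~\eqref{eq:main}, and in fact~\eqref{eq:main rough} follows from~\eqref{eq:main} simply by substituting them.

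For the integral, I would shift the Bromwich contour to $\Re u=u_0(t)$ (permissible, since the integrand is analytic in $\Re u>0$ and the horizontal connecting segments vanish) and replace $\mathrm{I}_{\sqrt{2u}}(r)/\mathrm{I}_0(r)$ by $g(u)$; the resulting error is $\Oh(u_0^{-1/2})=\Oh(t/\log(1/t))$ relative to the main term. On $u=u_0+\im s$ one has $\Phi(u_0+\im s)=\Phi(u_0)-\tfrac12\Phi''(u_0)s^2+\Oh(|\Phi'''(u_0)|\,|s|^3)$ for $|s|$ not too large, whereas $\Re\Phi(u_0+\im s)$ falls off like $-\tfrac12\sqrt{|s|}\log|s|$ as $|s|\to\infty$; taking the truncation radius $\delta$ of order $\log(1/t)\,\Phi''(u_0)^{-1/2}$ makes $\Phi(u_0)-\Re\Phi(u_0\pm\im\delta)\asymp\log(1/t)^2$, so the tails $|s|>\delta$ are super-polynomially negligible, while the Gaussian estimate on $|s|\le\delta$ gives $\int_{|s|\le\delta}\e^{\Phi(u_0+\im s)}\,\dd s=\e^{\Phi(u_0)}\sqrt{2\pi/\Phi''(u_0)}\,(1+\Oh(|\Phi'''(u_0)|\delta^3))$ with $|\Phi'''(u_0)|\delta^3=\Oh(\sqrt t\log(1/t)^2)$. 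Since $\dd u/(2\pi\im)=\dd s/(2\pi)$ on this contour, we obtain $f_r(t)=\e^{\Phi(u_0)}(2\pi\Phi''(u_0))^{-1/2}(1+\Oh(\sqrt t\log(1/t)^2))$. Finally, \eqref{eq:sp eq} yields $\Phi(u_0)=\tfrac12-tu_0+\sqrt{2u_0}-\log(\sqrt{2\pi}\,\mathrm{I}_0(r)\,(2u_0)^{1/4})$, hence $\e^{\Phi(u_0)}=\sqrt\e\,\e^{-tu_0+\sqrt{2u_0}}/(\sqrt{2\pi}\,\mathrm{I}_0(r)\,(2u_0)^{1/4})$; inserting this together with the expansion of $\Phi''(u_0)$ gives~\eqref{eq:main}.

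The delicate point is the contour estimate of the previous paragraph: one must control $\Re\Phi$ — equivalently $|\mathrm{I}_{\sqrt{2u}}(r)|$ — along the whole line $\Re u=u_0(t)$, not merely near the saddle, which requires the asymptotics of the Bessel function for large and genuinely complex order, and one must then balance $\delta$ (large enough that the tails die super-polynomially) against the size of the cubic remainder of $\Phi$ on $|s|\le\delta$. This balance is exactly what produces the error $\Oh(\sqrt t\log(1/t)^2)$ in~\eqref{eq:main}; a more careful term-by-term expansion near the saddle would yield a sharper bound, but this one already suffices for the pricing application.
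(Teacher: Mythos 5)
Your proposal follows essentially the same route as the paper: Bromwich inversion, replacing $\mathrm{I}_{\sqrt{2u}}(r)$ by its large-order asymptotics (you derive the leading term from the power series and Stirling, the paper cites Horn's expansion -- these agree, since $\Gamma(\nu+1)\sim\sqrt{2\pi}\,\nu^{\nu+1/2}\e^{-\nu}$), shifting the contour to the exact saddle $u_0(t)$, a quadratic local expansion with a cubic remainder, and a Gaussian completion with truncation radius $\delta\asymp\log(1/t)^2/t^{3/2}$. Your bookkeeping of $\Phi''$, $\Phi'''$, $\delta$, and the resulting error $\Oh(\sqrt{t}\log(1/t)^2)$ matches the paper's, and your simplification of $\Phi(u_0)$ via the saddle equation reproduces \eqref{eq:main} correctly.

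However, there is a genuine gap exactly where you yourself flag ``the delicate point.'' You write that ``$\Re\Phi(u_0+\im s)$ falls off like $-\tfrac12\sqrt{|s|}\log|s|$ as $|s|\to\infty$'' and that the drop at $|s|=\delta$ is of order $\log(1/t)^2$, and then conclude that ``the tails $|s|>\delta$ are super-polynomially negligible.'' This inference does not follow: knowing the value of $\Re\Phi$ at $\pm\im\delta$ and its decay rate at $\pm\im\infty$ does not preclude $\Re\Phi$ from rising again somewhere in between, which would destroy the tail bound. One needs an actual monotonicity or upper-envelope statement on the full ray $|s|\geq\delta$. The paper supplies precisely this: a lemma showing that for $\Re u>0$ and $|u|$ large, $\Re(\sqrt{u}\log u + B\sqrt{u})$ is monotone decreasing in $|\Im u|$, proved by an explicit computation with $\arg u$ and elementary trigonometric bounds. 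It then splits the tail into a near region $h\leq y<\e^{\log(1/t)^2/4}$ (controlled by that lemma, bounding the integral by the value at $y=h$ times the path length) and a far region $y\geq\e^{\log(1/t)^2/4}$ (controlled directly by the crude asymptotics of the integrand). Without an analogue of this lemma -- or some other device ruling out intermediate oscillations of $|\mathrm{I}_{\sqrt{2u}}(r)|$ along the vertical line -- your tail estimate is incomplete. Filling this in is the one substantive piece of work your sketch defers; the rest is sound and coincides with the paper's argument.
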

Formula~\eqref{eq:main} gives a much better approximation than~\eqref{eq:main rough};
the simplification
in~\eqref{eq:main rough} is of little use, since~$u_0(t)$ has to be computed
anyway to evaluate~\eqref{eq:main} or~\eqref{eq:main rough} numerically.

To get a feel for the growth of the exponential in~\eqref{eq:main}, we expand~$u_0(t)$
by bootstrapping (cf.\ de~Bruijn~\cite[Section~2.4]{deBr58}):
\begin{equation}\label{eq:u0 expans}
  u_0(t) = \frac{\log(1/t)^2}{2t^2}\left( 1 + \frac{2\log \log(1/t)}{\log(1/t)}
    - \frac{2\rho+\log 2}{\log(1/t)} + \oh\Big(\frac{1}{\log(1/t)}\Big) \right).
\end{equation}
Therefore the exponent in~\eqref{eq:main} has the expansion
\begin{multline}
  -t u_0(t) + \sqrt{2u_0(t)} 
    = -\frac{\log(1/t)^2}{2t}
    - \frac{\log(1/t)\log\log(1/t)}{t} \\
  + (1+\rho+\tfrac12 \log2)\frac{\log(1/t)}{t} + \oh\Big(\frac{\log(1/t)}{t}\Big).
  \label{eq:exp expans}
\end{multline}
This shows in particular that the formula
\begin{equation}\label{eq:f rough}
  f_r(t) = \exp\left( - \frac{\log(1/t)^2}{2t} + \oh\Big(\frac{\log(1/t)^2}{t}\Big) \right),
\end{equation}
obtained from~\eqref{eq:yor asympt} by formal differentiation, is correct.

For numerical accuracy, it is certainly preferable to use~\eqref{eq:main}
as it is, without replacing the exponent by~\eqref{eq:exp expans};
still, the expansion~\eqref{eq:u0 expans} can serve as good initial
guess when computing the root of~\eqref{eq:sp eq}. In this way, the leading term
of~$f_r(t)$ can be calculated effortlessly even for extremely small values
of~$t$, say $t=10^{-50}$.

\section{Analysis of the Laplace inversion integral}

The Laplace inversion formula yields the representation
\[
  f_r(t) = \frac{1}{2\im \pi} \int_{R-\im\infty}^{R+\im\infty} \e^{ut}
    \frac{\mathrm{I}_{\sqrt{2u}}(r)}{\mathrm{I}_0(r)}\, \dd u,
\]
where $R>0$, so that the integration contour lies in the right half-plane. To estimate the
growth of~$f_r(t)$ near $t=0$, we have to investigate the singularity at infinity
of the integrand.
For large index, the modified Bessel function admits the expansion~\cite{Ho99, Wa95}
\begin{equation}\label{eq:I asympt}
  \mathrm{I}_{\nu}(r) \sim \left( \frac{r}{2} \right)^\nu \e^\nu \nu^{-\nu-1/2}
  \left(c_0 + \frac{c_1}{\nu} + \dots \right),
\end{equation}
where the~$c_i$ are constants, with $c_0=1/\sqrt{2\pi}$.
This holds for $\nu\to\infty$, uniformly w.r.t.\ $\arg(\nu)$, as long
as $\arg(\nu)$ is bounded away from~$\pm\pi$.
Horn~\cite{Ho99} shows~\eqref{eq:I asympt} for~$\mathrm{J}_{\nu}(r)$,
the Bessel function of the first kind, but from the relation $\mathrm{I}_{\nu}(r)
=\e^{-\nu\pi\im/2} \mathrm{J}_{\nu}(r \e^{\pi\im/2})$ one easily sees that
replacing~$\mathrm{J}_{\nu}(r)$ by~$\mathrm{I}_{\nu}(r)$ only affects the
constants $c_1,c_2,\dots$ in this expansion.
If we let the real part~$R$ of our integration contour tend to infinity
as $t\to0$, we therefore have
\begin{align*}
  f_r(t) &=  \frac{1}{(2 \pi)^{3/2}\, \im\, \mathrm{I}_0(r)} \int_{R-\im\infty}^{R+\im\infty}
  \left( \frac{r}{2} \right)^{\sqrt{2u}} \e^{ut+\sqrt{2u}} (\sqrt{2u})^{-\sqrt{2u}-1/2}
  \dd u \times (1+\Oh(R^{-1/2})) \\
  &= \frac{2^{-7/4}}{\pi^{3/2}\, \im\, \mathrm{I}_0(r)} \int_{R-\im\infty}^{R+\im\infty}
  \exp\Big(ut - \tfrac12 \sqrt{2u}\log u + \sqrt{2}(1+\rho)\sqrt{u}  \\
  & \qquad \qquad -\tfrac14 \log u \Big)\, \dd u \times (1+\Oh(R^{-1/2})).
\end{align*}
The integrand of the latter integral has a saddle point, let us call
it $u_0=u_0(t)$, which is found by equating the derivative to zero.
This yields equation~\eqref{eq:sp eq}.
Shifting the integration contour through the saddle point achieves concentration,
so that only a small part of the contour matters asymptotically.

In many instances of the saddle point
method, it suffices to choose a contour that passes through an approximation
of the saddle point.
In our example, one might try to use a contour based on
the first terms of~\eqref{eq:u0 expans}.
However, painful calculations reveal that the concentration of the integrand
around the approximate saddle point is insufficient, no matter how many
terms of~\eqref{eq:u0 expans} are taken.
We therefore set the real part
of the integration contour to the \emph{exact} saddle point, so that $R=u_0$:
\begin{equation}\label{eq:simple int}
  f_r(t) \sim \frac{2^{-7/4}}{\pi^{3/2}\, \im\, \mathrm{I}_0(r)}  
    \int_{u_0-\im\infty}^{u_0+\im\infty}
  \exp\Big(ut - \tfrac12 \sqrt{2u} \log u + \sqrt{2}(1+\rho)\sqrt{u}
  -\tfrac14 \log u \Big)\, \dd u.
\end{equation}
Let~$y$ denote the new (real) integration variable:
\[
  u = u_0 + \im y, \qquad -\infty < y < \infty.
\]
Close to the saddle point, i.e.\ for small values of the new integration
variable~$y$, we have the uniform expansions
\begin{align*}
  \sqrt{u} &= \sqrt{u_0} + \frac{\im y}{2\sqrt{u_0}} + \frac{y^2}{8u_0^{3/2}}
    + \Oh\Big(\frac{y^3}{u_0^{3/2}}\Big) , \\
  \log u &= \log u_0 + \frac{\im y}{u_0} + \frac{y^2}{2u_0^2} + \Oh\Big(\frac{y^3}{u_0^3}\Big),
\end{align*}
and
\[
  \sqrt{u}\log u = \sqrt{u_0}\log u_0+ \frac{\im y}{\sqrt{u_0}}
    + \frac{\im \log(u_0) y}{2\sqrt{u_0}} + \frac{\log(u_0)y^2}{8u_0^{3/2}}
    + \Oh\Big(\frac{\log(u_0)y^3}{u_0^{5/2}}\Big).
\]
We insert these into the exponent of~\eqref{eq:simple int} and obtain
\begin{align}
  ut  - \tfrac12 \sqrt{2u}\log u & +   
    \sqrt{2}(1+\rho)\sqrt{u} 
    -\tfrac14 \log u \notag \\
  &= u_0 t -\tfrac12 \sqrt{2u_0} \log u_0
    + \sqrt{2}(1+\rho)\sqrt{u_0} - \tfrac14 \log u_0 \notag \\
  &\quad -My^2 + \Oh\Big(\frac{\log(u_0)y^3}{u_0^{5/2}}\Big), \label{eq:loc expans}
\end{align}
where
\begin{align}
  M &= \frac{\sqrt{2} \log u_0}{16u_0^{3/2}} - \frac{\sqrt{2}(1+\rho)}{8u_0^{3/2}}
  \label{eq:M} \\
    &= \frac{t^3}{2 \log(1/t)^2} \left(1+\Oh\Big(\frac{\log\log(1/t)}{\log(1/t)}\Big)\right).
    \label{eq:M expans}
\end{align}
Note that the $y$-terms in~\eqref{eq:loc expans} vanish, because
we integrate through a saddle point.
We now have to identify a range
\[
  -h < y < h
\]
for~$y=\Im(u)$ that captures the main contribution to the integral~\eqref{eq:simple int}.
A good choice is
\[
  h = \frac{\log(1/t)^2}{t^{3/2}},
\]
because it satisfies $h\sqrt{M}\to\infty$, so that the integral of the local
expansion~\eqref{eq:loc expans} can be completed to a full Gaussian integral:
\begin{align}
  \int_{-h}^h \e^{-M y^2} \dd y &=
    \frac{1}{\sqrt{2M}} \int_{-h\sqrt{2M}}^{h\sqrt{2M}} \e^{-w^2/2} \dd w \notag \\
    &\sim \frac{1}{\sqrt{2M}} \int_{-\infty}^\infty \e^{-w^2/2} \dd w \label{eq:gauss} \\
    &= \sqrt{\frac{\pi}{M}} \sim \frac{\sqrt{2\pi} \log(1/t)}{t^{3/2}}. \notag
\end{align}
Moreover, the error from~\eqref{eq:loc expans}, the local expansion at
the saddle point, is~$\oh(1)$, since
\begin{equation}\label{eq:err loc}
  \frac{\log(u_0)y^3}{u_0^{5/2}} = \Oh(\sqrt{t} \log(1/t)^2).
\end{equation}
We can thus determine the asymptotics of the portion $|\Im(u)|\leq h$
of the integral~\eqref{eq:simple int}:
\begin{align}
  \frac{2^{-7/4}}{\pi^{3/2}\, \im\, \mathrm{I}_0(r)} & \int_{u_0-\im h}^{u_0+\im h}
  \exp\Big(ut - \tfrac12 \sqrt{2u}\log u + \sqrt{2}(1+\rho)\sqrt{u}
    -\tfrac14 \log u \Big)\, \dd u \label{eq:central} \\
    &\sim \frac{2^{-7/4}}{\pi^{3/2}\, \mathrm{I}_0(r)}
      \e^{u_0 t -\tfrac12 \sqrt{2u_0} \log u_0
    + \sqrt{2}(1+\rho)\sqrt{u_0} - \tfrac14 \log u_0}\int_{-h}^h \e^{-My^2}\dd y \notag \\
    &\sim \frac{2^{-7/4}}{\pi\, \mathrm{I}_0(r)} M^{-1/2} u_0^{-1/4}
    \e^{u_0 t -\tfrac12 \sqrt{2u_0} \log u_0 + \sqrt{2}(1+\rho)\sqrt{u_0}}.
      \label{eq:central2}
\end{align}
This gives the right-hand side of~\eqref{eq:main}, after
expressing $\sqrt{u_0}\log u_0$ via the saddle point equation~\eqref{eq:sp eq},
which yields
\begin{equation}\label{eq:from sp}
  -\tfrac12 \sqrt{2u_0}\log u_0 = -2u_0 t-\rho \sqrt{2u_0}+\tfrac12.
\end{equation}
Furthermore, expanding~$u_0$ by~\eqref{eq:u0 expans}
gives the expression in~\eqref{eq:main rough}.
Note that we have not yet proved~\eqref{eq:main} and~\eqref{eq:main rough};
it remains to show that the tails of~\eqref{eq:simple int}, i.e.\
the parts where $|\Im(u)| \geq h$, are asymptotically negligible.
This ensures that~\eqref{eq:central} indeed captures the asymptotics of~$f_r(t)$.

\section{Tail estimate}

To bound the tails of~\eqref{eq:simple int},
it suffices to consider the case $y=\Im(u)\geq h$,
since the lower half of the tail can be handled by symmetry.
We first deal with the part of the contour in~\eqref{eq:simple int} where the
imaginary part of the integration variable is very large, say
$y\geq \e^{\log(1/t)^2/4}$. Then~$y$ clearly dominates~$u_0$,
and it follows from
\[
  \Re(\log u) \sim \log y, \quad \Re(\sqrt{u}) \sim \tfrac12 \sqrt{2y},
  \quad \text{and}\quad \Re(\sqrt{u}\log u) \sim \tfrac12 \sqrt{2y} \log y
\]
that the absolute value of the integrand is bounded by
\[
  \e^{u_0t - \sqrt{y}}
\]
for small~$t$. Hence we obtain the bound
\begin{equation}\label{eq:outer tail}
  \e^{u_0 t} \int_{\e^{\log(1/t)^2/4}}^\infty \e^{-\sqrt{y}} \dd y \sim
    2\exp\left( u_0t + \tfrac18 \log(1/t)^2 -\e^{\log(1/t)^2/8} \right).
\end{equation}

Finally, we bound the portion of the integral~\eqref{eq:simple int}
that is close to the central part, i.e.,
\begin{equation}\label{eq:y inner}
  h \leq y < \e^{\log(1/t)^2/4}.
\end{equation}
The following lemma shows that, for small~$t$, the absolute value of the integrand decreases
as~$y$ increases.
\begin{lemma}
  Let~$B$ be a real number. Then,
  for $\Re(u)>0$ and $|u|$ sufficiently large, the real part of
  $\sqrt{u}\log u + B \sqrt{u}$
  decreases w.r.t. $|\Im(u)|$.
\end{lemma}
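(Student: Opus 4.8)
The plan is to prove this by a direct computation in polar coordinates, reading the statement along the circles $|u|=\mathrm{const}$: fix a large $\rho>0$, write $u=\rho\e^{\im\theta}$, and regard the real part of $F(u):=\sqrt u\log u+B\sqrt u$ (principal branches, well defined for $\Re(u)>0$) as a function of $\theta=\arg(u)\in(-\tfrac\pi2,\tfrac\pi2)$, a range on which $|\Im(u)|=\rho\sin|\theta|$ is strictly increasing in $|\theta|$. Since $\overline{F(\bar u)}=F(u)$ when $B$ is real, $\Re F$ is even in $\theta$, so it suffices to show that $\theta\mapsto\Re F(\rho\e^{\im\theta})$ is strictly decreasing on $[0,\tfrac\pi2)$ once $\rho$ is large enough.

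The next step is to write the real part out. From $F(\rho\e^{\im\theta})=\sqrt\rho\,\e^{\im\theta/2}\bigl(\log\rho+B+\im\theta\bigr)$ one reads off
\[
  \Re F(\rho\e^{\im\theta})=\sqrt\rho\Bigl((\log\rho+B)\cos\tfrac\theta2-\theta\sin\tfrac\theta2\Bigr),
\]
and therefore
\[
  \frac{\partial}{\partial\theta}\,\Re F(\rho\e^{\im\theta})
    =-\frac{\sqrt\rho}{2}\Bigl((\log\rho+B)\sin\tfrac\theta2+2\sin\tfrac\theta2+\theta\cos\tfrac\theta2\Bigr).
\]
Now take $\rho$ large enough that $\log\rho+B>0$, that is $|u|>\e^{-B}$. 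Then for $\theta\in(0,\tfrac\pi2)$ each of the three summands in the bracket is positive, so the derivative is strictly negative there (it vanishes only at $\theta=0$, i.e.\ on the positive real axis). Hence $\Re F(\rho\e^{\im\theta})$ strictly decreases in $\theta$, and so $\Re(\sqrt u\log u+B\sqrt u)$ strictly decreases in $|\Im(u)|$ on $\{\,\Re(u)>0,\ |u|>\e^{-B}\,\}$, which is the assertion of the lemma.

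I do not expect a genuine obstacle: the computation is elementary, and the decisive point is simply that the term $\log\rho$, which dominates all the others once $\rho$ is large, appears in the $\theta$-derivative with a fixed negative sign. What requires a little care is pure bookkeeping — the estimate has to be uniform in $\theta$, which it is because the condition $\log\rho+B>0$ restricts only $|u|$ and not $\theta$; the bracket above is strictly positive on $(0,\tfrac\pi2)$ and degenerates only at the endpoint $\theta=0$, where the one-sided derivative still has the correct sign; and $u$ must stay off the branch cut $(-\infty,0]$ of $\sqrt{\cdot}$ and $\log$, which $\Re(u)>0$ guarantees. This pointwise monotonicity of $\Re(\sqrt u\log u+B\sqrt u)$ is then used, together with the behaviour of the remaining factors of the integrand of~\eqref{eq:simple int}, to bound that integrand on the range~\eqref{eq:y inner}, as claimed just before the lemma.
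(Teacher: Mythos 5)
Your computation of the angular derivative is correct as far as it goes, but you have proved monotonicity along the wrong family of curves, and the statement you end up with cannot do the job the lemma is needed for. You hold $|u|=\rho$ fixed and vary $\theta=\arg(u)$; the tail estimate, however, integrates along the \emph{vertical} line $\Re(u)=u_0$, and what is required is monotonicity of the integrand's modulus in $y=\Im(u)$ with $\Re(u)$ held fixed. Along that vertical line both $|u|$ and $\arg(u)$ grow with $y$, and the radial growth dominates: indeed $\Re(\sqrt{u}\log u)\sim\sqrt{y/2}\,\log y\to+\infty$ as $y\to\infty$ with $x=\Re(u)$ fixed, so $\Re(\sqrt{u}\log u+B\sqrt{u})$ in fact \emph{increases} along the contour. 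Read as a statement about the integration contour, your conclusion is therefore false; read as a statement about circles $|u|=\mathrm{const}$, it is true but does not let you bound the integrand at $u_0+\im y$ by its value at $u_0+\im h$, since those two points lie on different circles. The paper's proof instead computes $\partial/\partial y$ of $\Re(\sqrt{u})$ and $\Re(\sqrt{u}\log u)$ at fixed $x$ and shows this derivative is \emph{positive} for $|u|$ large, which is exactly what makes the exponent $-\tfrac{1}{\sqrt{2}}\bigl(\sqrt{u}\log u+B\sqrt{u}\bigr)$ (with $B=-2(1+\rho)$) decrease and hence the integrand's absolute value decay away from the saddle.

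In fairness, the lemma as printed says ``decreases'' while the paper's own proof concludes with ``a positive derivative w.r.t.~$y$''; the statement has the sign backwards (the quantity that decreases along the vertical contour is $-(\sqrt{u}\log u+B\sqrt{u})$, equivalently the real part of the integrand's exponent), and this slip likely steered you toward an interpretation under which ``decreases'' could be made literally true. To repair your argument you would need to combine your angular derivative with the radial one, $\frac{\partial}{\partial\rho}\Re F=\frac{1}{2\sqrt{\rho}}\bigl((\log\rho+B+2)\cos\tfrac{\theta}{2}-\theta\sin\tfrac{\theta}{2}\bigr)$, along the vertical direction $\dd u=\im\,\dd y$, i.e.\ $\dd\rho=\sin\theta\,\dd y$ and $\dd\theta=\rho^{-1}\cos\theta\,\dd y$; doing so reproduces the paper's computation and shows the net derivative is positive, not negative.
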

\begin{proof}
  We write $u=x + \im y$. By symmetry, it suffices to consider the case $y>0$,
  so that $\arg(u)>0$.
  Straightforward calculations show that
  \[
    \frac{\partial}{\partial y} \Re(\sqrt{u})
    = \tfrac12 |u|^{3/2}\left( y \cos \tfrac{\arg(u)}{2}
    - x \sin \tfrac{\arg(u)}{2} \right)
  \]
  and
  \begin{align*}
    \frac{\partial}{\partial y} \Re(\sqrt{u}\log u)
    = &\tfrac12 |u|^{3/2}\bigg( \left(\log|u| +2\right)\left(y \cos \tfrac{\arg(u)}{2}
    - x \sin \tfrac{\arg(u)}{2} \right) \\
    &- \arg(u)\left(x \cos \tfrac{\arg(u)}{2}
    + y \sin \tfrac{\arg(u)}{2}\right) \bigg).
  \end{align*}
  Hence we are led to investigate the sign of
  \[
    y\left(\left(\log|u| +B+2\right)\left( \cos \tfrac{\arg(u)}{2} - \tfrac{x}{y} \sin
    \tfrac{\arg(u)}{2} \right) -  \arg(u) \left(\tfrac{x}{y}\cos \tfrac{\arg(u)}{2} +  \sin  
    \tfrac{\arg(u)}{2}\right) \right).
  \]
  Suppose that~$|u|$ is so large that $\log|u| +B+2 \geq 12$. In the preceding formula,
  we estimate the trigonometric functions by the first
  term of their Taylor series at zero, except the first cos, where we use two terms.
  This yields the lower bound
  \begin{align*}
    & y\left(12\left(1-\tfrac18 \arg(u)^2-\tfrac{x}{2y} \arg(u)\right)
    -\tfrac{x}{y}\arg(u) - \tfrac12 \arg(u)^2 \right) \\
    & =y\left(12-2\arg(u)^2 - \tfrac{7x}{y} \arg(u)\right) \\
    & =y\left.\left(12-2\arctan(w)^2-\tfrac{7}{w}\arctan(w) \right)\right|_{w=y/x}.
  \end{align*}
  Now observe that $\arctan(w)^2 < \pi^2/4$ and $\arctan(w)<w$ for
  $w>0$, so that
  \[
    12-2\arctan(w)^2-\tfrac{7}{w}\arctan(w) > 12 - \pi^2/2 - 7 > 0.
  \]
  This shows that $\Re(\sqrt{u}\log u + B \sqrt{u})$ has a positive derivative
  w.r.t.~$y$.
\end{proof}
Therefore, we can bound the part~\eqref{eq:y inner} of the integral~\eqref{eq:simple int}
by the value of the integrand
at~$y=h$ times the length of the path. By~\eqref{eq:loc expans},
\eqref{eq:from sp}, and
\[
  \left.M y^2\right|_{y=h} \sim \tfrac12 \log(1/t)^2,
\]
this amounts to a bound of the form
\begin{multline}\label{eq:inner tail}
  \e^{-t u_0(t) 
    + \sqrt{2}(1+2\rho)\sqrt{u_0(t)} - \frac12 \log(1/t)^2 + \oh(\log(1/t)^2)}
    \times \e^{\frac14 \log(1/t)^2} \\
    = \e^{-t u_0(t) 
    + \sqrt{2}(1+2\rho)\sqrt{u_0(t)}-\frac14 \log(1/t)^2 + \oh(\log(1/t)^2)}.
\end{multline}
To complete the proof of Theorem~\ref{thm:main},
let us now compare the six error terms that arose in the analysis (see
Table~\ref{ta:err}).
Note that the error from completing the tails of the Gaussian integral
in~\eqref{eq:gauss} is
\begin{align}
  \sqrt{\frac2M}\int_{h\sqrt{2M}}^\infty \e^{-w^2/2} \dd w &\sim
    \sqrt{\frac2M}\left.\frac{\e^{-w^2/2}}{w}\right|_{w=h\sqrt{2M}} \notag \\
    &= \e^{-\frac12 \log(1/t)^2 + \oh(\log(1/t)^2)}. \label{eq:gauss err}
\end{align}
\begin{table}[h]
  \begin{center}
    \begin{tabular}{l|l} 
       \emph{Source of error} & \emph{Relative error} \\ \hline\hline
       Replace $I_\nu$ by~\eqref{eq:I asympt}  & $\Oh(t/\log(1/t))
       \vphantom{X^{X^{X}}}$  \\ \hline
       Local expansion (see~\eqref{eq:loc expans}
         and~\eqref{eq:err loc}) & $\Oh(\sqrt{t}\log(1/t)^2)
         \vphantom{X^{X^{X^{X}}}}$ \\ \hline
       Gaussian tails (see~\eqref{eq:gauss} and~\eqref{eq:gauss err}) &
         $\exp(-\tfrac12 \log(1/t)^2
         + \oh(\log(1/t)^2)) \vphantom{X^{X^{X^{X}}}}$ \\ \hline
       Relative error of~$M$ (see~\eqref{eq:M expans} and~\eqref{eq:gauss}) &
       $\Oh(\frac{\log\log(1/t)}{\log(1/t)}) \vphantom{X^{X^{X^{X}}}}$ \\ \hline
       Outer tail (see~\eqref{eq:outer tail}) & 
       $\exp(-\e^{\log(1/t)^2/8} + \oh(\e^{\log(1/t)^2/8})) \vphantom{X^{X^{X^{X}}}}$ \\ \hline
       Inner tail (see~\eqref{eq:inner tail}) &
         $\exp(-\tfrac14 \log(1/t)^2 + \oh(\log(1/t)^2)) \vphantom{X^{X^{X^{X}}}}$
    \end{tabular}
  \end{center}
  \caption{}
  \label{ta:err}
\end{table}
If~$M$ is not expanded, i.e., \eqref{eq:M} is used, then
the error from the local expansion dominates, which leads to~\eqref{eq:main}.
If, on the other hand, the expansion~\eqref{eq:M expans} of~$M$ is taken,
then it is the relative error of~$M$ that prevails.

\section{Comments}

The left tail of the Hartman-Watson distribution (see~\eqref{eq:f rough})
is somewhat thinner than that
of the L\'{e}vy distribution (stable distribution with index $\alpha=\tfrac12$),
with density
\[
  g(t) = \frac{1}{\sqrt{2\pi t^3}}\, \e^{-1/2t}, \qquad t>0,
\]
and Laplace transform
\[
  \int_0^\infty \e^{-ut}g(t)\, \dd t = \e^{-\sqrt{2u}}, \qquad \Re(u)>0.
\]
The faster decay of the Laplace transform of the Hartman-Watson distribution,
of order $\exp(-\sqrt{u}\log u)$, becomes manifest in the additional factor
$\log(1/t)^2$ in the exponent of~\eqref{eq:f rough}.

We now briefly comment on possible refinements of
Theorem~\ref{thm:main}. Technically speaking, continuing the
expansion~\eqref{eq:M expans} and inserting into~\eqref{eq:central2}
refines~\eqref{eq:main rough} to a full asymptotic expansion.
A better expansion, respecting the asymptotic scale of the problem, can be
obtained by retaining the explicit formula~\eqref{eq:M} for~$M$, and
taking more terms in~\eqref{eq:I asympt} and~\eqref{eq:loc expans}.
This should pose no essential difficulties; note, however, that each term
in the expansion~\eqref{eq:I asympt} gives rise to a new saddle point,
as the coefficient of~$1/u$ in~\eqref{eq:sp eq} changes. Thus the expansion
will involve several implicitly defined functions of~$t$ besides~$u_0(t)$.
Since the dependence of the solution of~\eqref{eq:sp eq} on the coefficient
of~$1/u$ is light, it might be possible to give an expansion that features
only~$u_0(t)$. This seems of little practical interest, though.

Concerning applications,
our results can be used as a substitute for the Hartman-Watson density for small
arguments, in particular, for evaluating the density of~$A_t^{(\nu)}$ numerically
for small~$t$
(after integrating~\eqref{eq:yor} w.r.t.\ the law of $W_t+\nu t$).
The related problem of
determining small time asymptotics for the density of~$A_t^{(\nu)}$ is left to future research.
This density is difficult to evaluate numerically for small time~\cite{Is05}.
Analyzing it asymptotically requires handling a double integral;
see Tolmatz~\cite{To00,To05} for asymptotic evaluations of double integrals pertaining
to other functionals of Brownian motion (resp.\ the Brownian bridge).
For $\nu=0$, the analysis should be simpler, as the density of~$A^{(0)}_t$
can be expressed as a single integral, via Bougerol's identity~\cite{Fo11,MaYo05}.
Note that tail asymptotics~\cite{GuSt06,GuSt10} and large time
asymptotics~\cite{Fo11,MaYo05} of~$A_t^{(\nu)}$ are known.
See also Dufresne~\cite{Du04} for related limit laws.

\bibliographystyle{siam}
\bibliography{../gerhold}

\end{document}